\newtheorem{theorem}{Theorem}[section]
\newtheorem{proposition}[theorem]{Proposition}
\newtheorem{question}[theorem]{Question}
\newtheorem{lemma}[theorem]{Lemma}
\newtheorem{corollary}[theorem]{Corollary}
\theoremstyle{definition}
\newtheorem{definition}[theorem]{Definition}
\renewcommand{\phi}{\varphi}
\renewcommand{\rho}{\varrho}
\renewcommand{\epsilon}{\varepsilon}
\newcommand{\susbet}{\subset} 
\newcommand{\abs}[1]{\lvert#1\rvert}
\newcommand{\diam}{\operatorname{diam}}
\DeclareMathOperator{\CAT}{CAT}
\DeclareMathOperator{\R}{\mathbb{R}}
\DeclareMathOperator{\Z}{\mathbb{Z}}
\DeclareMathOperator{\conv}{\sigma--conv}
\DeclareMathOperator{\convt}{\widetilde{\sigma}--conv}
\DeclareMathOperator{\convm}{m--conv}
\DeclareMathOperator{\convNeu}{conv}
\DeclareMathOperator{\closure}{closure}
\newcommand{\olsi}[1]{\,\overline{\!{#1}}}
\numberwithin{equation}{section}
\title[A non-compact convex hull]{A non-compact convex hull \\ in generalized non-positive curvature}
\author{Giuliano Basso,  Yannick Krifka, and Elefterios Soultanis}
\keywords{non-positive curvature, convex hulls, conical bicombing, CAT(0) space, injective metric space}
\subjclass{53C23, 51F99}
\begin{document}

\begin{abstract}
In this article, we are interested in metric spaces that satisfy a weak non-positive curvature condition in the sense that they admit a conical geodesic bicombing. We show that the analog of a question of Gromov about compactness properties of convex hulls in \(\CAT(0)\) spaces has a negative answer in this setting. Specifically, we prove that there exists a complete metric space \(X\) that admits a conical bicombing \(\sigma\) such that \(X\) has a finite subset whose closed \(\sigma\)-convex hull is not compact.
\end{abstract}

\maketitle

 \vspace{-0.5em}

\section{Introduction}
Let \((X, d)\) be a metric space and \(\sigma_{xy}\colon [0,1]\to X\) be a geodesic in \(X\) such that \(\sigma_{xy}(0)=x\) and \(\sigma_{xy}(1)=y\). A selection \(\sigma\) of such a geodesic \(\sigma_{xy}\) for all \(x\), \(y\in X\) is called a \textit{(geodesic) bicombing}. The idea to distinguish certain geodesics of a metric space and to investigate geometric questions in such a setting goes back to Busemann and Phadke \cite{busemann--1987} (see also \cite{ lytchak-2007, kleiner--1999,  lang-2013, okon--2000}). The term bicombing was coined by Thurston \cite[p. 84]{word-processing-1992}, and originally bicombings consisting of quasi-geodesics were mainly studied in the context of geometric group theory \cite{alonso-bridson-1995, gromov-1993, huck-rosebnrock-1993}. Following Descombes and Lang \cite{descombes-lang-2015}, we say that a bicombing \(\sigma\) is \textit{conical} if for all \(x\), \(y\), \(x'\), \(y'\in X\),
\begin{equation}\label{eq:conical}
d(\sigma_{xy}(t), \sigma_{x'y'}(t)) \leq (1-t)\cdot d(x, x')+t\cdot d(y, y')
\end{equation}
for all \(t\in [0,1]\). Hence, the distance between the points on the \(\sigma\)--geodesics at time \(t\) is smaller than the linear interpolation of the distances between the endpoints. It is well-known that non-positively curved metric spaces such as \(\CAT(0)\) spaces or, more generally, Busemann spaces admit a conical bicombing. See  \cite{alexandrov-2019, ballmann--1995, bridson-haefliger-1999, burago-2001, papadopoulos--2014} for excellent introductions to these spaces, which are synthetic generalizations of Riemannian manifolds of non-positive sectional curvature. Recently, metric spaces that admit a conical bicombing have gained some interest and found a diverse range of applications. They can be used to study Helly groups \cite{chalopin2020helly, huang-osajda-2021}, Lipschitz extension problems \cite{creutz-2020, mendel-naor-2013, naor2021extension}, and metric fixed point problems \cite{karlsson2024metric, ito--1979, kohlenabch--2010}. 

In this article, we use conical bicombings as a test case to study compactness properties of convex hulls in non-positively curved metric spaces. The starting point of our considerations is the following intriguing question due to Gromov \cite[6.B\(_1\)(f)]{gromov-1993}.

\begin{question}[Gromov]\label{qe:gromov}
Let \(X\) be a complete \(\CAT(0)\) space and \(K\subset X\) a compact subset. Is it true that the closed convex hull of \(K\) is compact?
\end{question} 

This question has been popularized by Petrunin (see \cite{petrunin-MO-6627} and \cite[p.\ 77]{petrunin2009pigtikal}). A metric space is called \textit{proper} if every of its closed bounded subsets is compact. For proper metric spaces, Question~\ref{qe:gromov} has an affirmative answer. However, when \(X\) is not proper, it seems very difficult to answer. We remark that using standard techniques from \(\CAT(0)\) geometry one can show that Question~\ref{qe:gromov} has an affirmative answer if and only if it has an affirmative answer for finite subsets (see \cite{duchesne-2018}). However, already the case of three-point subsets is completely open. We refer to \cite{lytchak-petrunin-2022} for more information.

Clearly, Question~\ref{qe:gromov} can also be stated for spaces with a conical bicombing.
 We say that \(C\subset X\) is \textit{\(\sigma\)-convex} if  for all \(x\), \(y\in C\), the geodesic \(\sigma_{xy}\) is contained in \(C\). For any \(A\subset X\), we consider the closed \(\sigma\)-convex hull of \(A\), 
\[
\conv(A)= \bigcap C,
\]
where the intersection is taken over all closed \(\sigma\)-convex subsets \(C\susbet X\) containing \(A\). If \(X\) is a complete metric space and \(\sigma\) a conical bicombing on \(X\), then the pair \((X,\sigma)\) is called a \textit{space of generalized non-positive curvature}. Our main result shows that for such spaces the analog of Question~\ref{qe:gromov} has a negative answer.

\begin{theorem}\label{thm:non-cpmpactness}
There exists a space of generalized non-positive curvature that has a finite subset whose closed \(\sigma\)-convex hull is not compact.
\end{theorem}

Thus, to obtain an affirmative answer to Gromov's question, more than just the convexity properties of the metric must be used.

We remark that it follows from results of \cite{basso2020extending} that there is a metric space as in Theorem~\ref{thm:non-cpmpactness} which is additionally an injective metric space; see Theorem~\ref{thm:extension-to-injective-metric-spaces} below. Injective spaces are prime examples of metric spaces with a conical bicombing. Descombes and Lang \cite{descombes-lang-2015} showed that injective metric spaces of finite combinatorial dimension admit a bicombing which satisfies a stronger convexity property than \eqref{eq:conical}. We do not know whether Theorem~\ref{thm:non-cpmpactness} also holds  for these bicombings. 

The main work behind the construction of the metric space \(X\) in Theorem~\ref{thm:non-cpmpactness} is done in a discrete setting. In Section~\ref{sec:appending-midpoints},  we start with a finite set \(V_0\) and iteratively append midpoints to it. This yields a nested sequence 
\[
V_0\subset V_1 \subset \dotsm \subset V_n \subset \dotsm 
\]
of finite sets such that their union \(V\) is closed under a midpoint map 
\[
m\colon V \times V \to V.
\]
We then metrize \(V\) in such a way that \(m\) satisfies a discrete analog of the conical inequality \eqref{eq:conical}.  In Section~\ref{sec:not-totally-bounded}, we explicitly construct an infinite \(\epsilon\)-separated subset of \((V, d)\) for \(\epsilon=\tfrac{1}{4}\). Hence, \(V\) is not totally bounded and thus its metric completion \(X\) is not compact. Finally, in Section~\ref{sec:conical-midpoint}, we use a classical construction that goes back to Menger \cite{menger-1928} to show that \(m\) induces a conical bicombing \(\sigma\) on \(X\) in a natural way. Moreover, by construction, it follows that \(X\) is the closed \(\sigma\)-convex hull of \(V_0\). Hence, \((X, \sigma)\) is a space of generalized non-positive curvature with the properties we are looking for, proving Theorem~\ref{thm:non-cpmpactness}.

The original idea behind our construction was to ensure the existence of the initial object \(X\) in the following theorem. 

\begin{theorem}\label{thm:universal}

For each positive integer \(n\) there exists a complete metric space \(X\) with the following properties: 

\begin{enumerate}
    \item \(X\) admits a conical bicombing \(\sigma\) and there is an \(n\)-point subset \(A_0\subset X\) such that \(X\) is the closed \(\sigma\)-convex hull of \(A_0\);
    \item whenever \(A\subset Y\) is an \(n\)-point subset of a complete \(\CAT(0)\) space \(Y\), then there exists a Lipschitz map \(\Phi\colon X \to Y\) such that \(\Phi(X)\) is convex and contains \(A\). 
\end{enumerate} 
\end{theorem}

We actually prove a stronger statement than Theorem~\ref{thm:universal}. Instead of complete \(\CAT(0)\) spaces \(Y\), more general non-positively curved target spaces can be considered. See Theorem~\ref{thm:universal-with-general-targets} below for the exact statement. We remark that, by construction, \(\convNeu(A)\susbet \closure( \Phi(X))\). Therefore, if \(\Phi(X)\) is precompact, then the closed convex hull of \(A\) is compact. Given this relation, it seems reasonable to suspect that \(X\) is non-compact. As it turns out, this is the case for every \(n>1\); see Theorem~\ref{thm:Z-non-cpt}.

One may of course wonder whether there also exists such a space \(X\) as above, which is in addition a complete \(\CAT(0)\) space. The existence of such spaces would reduce Gromov's question to the problem of deciding whether these spaces \(X\) are compact or not. If they are all compact, then Question~\ref{qe:gromov} has an affirmative answer. On the other hand, the non-compactness of \(X\) for some \(n\) gives a negative answer. However, our proof does not seem to be directly amenable for generating \(\CAT(0)\) spaces.

\subsection{Acknowledgements} The first named author is indebted to Urs Lang, Alexander Lytchak, and Stephan Stadler for useful discussions about convex hulls. We also thank the anonymous referees for their helpful suggestions, which prompted us to considerably simplify our original construction.

\section{Bicombings}

Let \((X, d)\) be a metric space. We say that \(\sigma\colon [0,1]\to X\) is a \textit{geodesic} if \(d(\sigma(s), \sigma(t))=\abs{s-t}\cdot d(\sigma(0), \sigma(1)) \) for all \(s\), \(t\in [0,1]\). A map 
\[
\sigma\colon X\times X \times [0,1]\to X
\]
is called a \textit{(geodesic) bicombing} if for all \(x\), \(y\in X\), the path \(\sigma_{xy}(\cdot)\colon [0,1]\to X\) defined by \(\sigma_{xy}(t)=\sigma(x, y, t)\) is a geodesic connecting \(x\) to \(y\).

We remark that, in contrast, a map \(\sigma\colon X\times [0,1]\to X\) is called a \textit{combing} with basepoint \(p\in X\) if for all \(x\in X\), the path \(\sigma(x, \cdot)\) is a geodesic connecting \(p\) to \(x\). However, we will not make use of this definition. Bicombings are also called \textit{system of good geodesics}; see \cite{ezawa2021visual, fukaya-2020, osajda-2009}. 

We say that \(\sigma\) is \textit{reversible} if \(\sigma_{xy}(t)=\sigma_{yx}(1-t)\) for all \(x\), \(y\in X\) and all \(t\in [0,1]\). In \cite[Proposition 1.3]{basso-miesch-2019} it is shown that any complete metric space with a conical bicombings also admits a conical reversible bicombing (see also \cite{descombes-2016} for an earlier result).

\section{Construction of \((V,d)\)}\label{sec:appending-midpoints}
Throughout this section we fix a positive integer \(j\). This \(j\) will correspond to the parameter from Theorem~\ref{thm:universal}.  Let \(d_0\) denote the discrete metric on \(V_0=\{0, \ldots, j-1\}\). For arbitrary sets \(a\), \(b\) we consider the set
\[
m(a, b)=
\begin{cases}
\{a, b\} & \text{ if } a\neq b, \\
a & \text{otherwise}.
\end{cases}
\]
For \(n>0\) we define recursively 
\[
V_n=\{m(a,b) : a, b\in V_{n-1}\}.
\]
For example, if \(j=2\), then we have 
\[
V_1=\{ 0, 1, \{0, 1\}\},
\]
and not \(V_1=\{\{0\}, \{1\}, \{0,1\}\}\). We have constructed an increasing sequence 
\[
V_0\subset V_1 \subset \dotsm \subset V_n \subset \dotsm .
\]
Let \(V\) denote the union of these sets. For any \(x\in V\setminus V_0\) there exist unique distinct \(a\), \(b\in V\) such that \(x=m(a,b)\), we call these elements the \textit{parents} of \(v\). 
\begin{definition}\label{def:def-of-d}
We define the function \(d\colon V\times V\to \R\) via the following recursive rules:
\begin{enumerate}
    \item if \(x\), \(y\in V_0\), then we set \(d(x, y)=d_0(x, y)\);\vspace{0.5em}
    \item if \(x\in V_0\) and \(y\in V\setminus V_0\) with parents \(c\), \(d\), then we set 
\[
d(x, y)=d(y, x)=\frac{1}{2}\big( d(x,c)+d(x, d) \big);
\]
\item and finally for arbitrary \(x\), \(y\in V\setminus V_0\) with parents \(a\), \(b\) and \(c\), \(d\), respectively, we set
\[
d(x, y)=\frac{1}{2}\cdot \min\{ A,\, A',\, B,\, B'\},
\]
where
\begin{align*}
A&=d(x,c)+d(x,d), & B&=d(a,c)+d(b,d), \\
A'&=d(y,a)+d(y,b), & B'&=d(a,d)+d(b,c).   
\end{align*}
\end{enumerate}
\end{definition}

At first glance it may not be clear that \(d\) is a well-defined function.  For \(x\in V\) we call the minimal \(n\geq 0\) such that \(x\in V_n\) the \textit{index} of \(x\). Using this notion it is not too difficult to show that \(d\) is well-defined. 

\begin{lemma}
The function \(d\colon V \times V \to \R\) from Definition~\ref{def:def-of-d} is well-defined. 
\end{lemma} 

\begin{proof}
Given \((x,y)\in V\times V\), we set \(\phi(x,y)=i(x)+i(y)\), where \(i(x)\) and \(i(y)\) denote the indices of \(x\) and \(y\), respectively. Clearly, \(d\) is well-defined on \(\phi^{-1}(0)\subset V\times V\). Now, if \(n=\phi(x,y)>0\), then the definition of \(d(x, y)\) only involves evaluations of \(d\) at tuples \((x', y')\) with  \(\phi(x', y')< n\). Hence, it follows by induction that \(d\) is a well-defined function of \(V\times V\).
\end{proof}

In the following, we show that \(d\) defines a metric on \(V\). 

\begin{proposition}\label{prop:v-is-metric}
\((V, d)\) is a metric space.
\end{proposition}

\begin{proof}
We use the shorthand notation \(xy:=d(x,y)\). Suppose \(d\) does not satisfy the triangle inequality. Then there exist \(x\), \(y\), \(z\in V\) such that
\begin{equation}\label{eq:disruption}
xz > zy+yx.
\end{equation}
Let \(x\), \(y\), \(z\in V\) be such that \eqref{eq:disruption} holds and the sum \(i(x)+i(y)+i(z)\) of the indices of \(x\), \(y\), and \(z\) is minimal. Without loss of generality, we may suppose that \(i(x)\leq i(z)\). To begin, we show that \(x\), \(y\), \(z\in V\setminus V_0\).

Suppose that \(z\in V_0\). Then it follows that \(x\in V_0\) and since \(d\) defines a metric on \(V_0\), we have that \(y\notin V_0\). Let \(c\), \(d\in V\) denote the parents of \(y\). We have
\[
zy+yx=\tfrac{1}{2}zc+\tfrac{1}{2}zd+\tfrac{1}{2}cx+\tfrac{1}{2}dx=\tfrac{1}{2}(zc+cx)+\tfrac{1}{2}(zd+dx),
\]
since \(i(c), i(d)<i(y)\), it thus follows that
\[
zy+yx\geq \tfrac{1}{2}zx+\tfrac{1}{2}zx=zx,
\]
which contradicts \eqref{eq:disruption}. Hence, it follows that \(i(z)>0\). Let \(e\), \(f\in V\) denote the parents of \(z\).

Now, suppose that \(i(y)=0\). Then
\begin{align*}
zy+yx&=\tfrac{1}{2} e y+\tfrac{1}{2}f y+\tfrac{1}{2}yx+ \tfrac{1}{2}yx\\
&=\tfrac{1}{2}(ey+yx)+\tfrac{1}{2}(fy+yx),
\end{align*}
and thus since \(i(e)\), \(i(f)<i(z)\), it follows that 
\[
zy+yx \geq \tfrac{1}{2}e x+\tfrac{1}{2} f x \geq zx,
\]
which contradicts \eqref{eq:disruption}. Hence, the case \(y\in V_0\) cannot occur. Let \(c\), \(d\in V\) denote the parents of \(y\).  

Finally, suppose that \(i(x)=0\). If
\[
zy=\tfrac{1}{2} z c+\tfrac{1}{2} z d,
\]
then using that \(x\in V_0\), we find that 
\[
zy+yx=\tfrac{1}{2}(zc+cx)+\tfrac{1}{2}( zd+dx)\geq z x,
\]
since \(i(c)\), \(i(d)<i(y)\). This contradicts \eqref{eq:disruption}. Thus,
\[
zy=\tfrac{1}{2}\min\{ ey+ fy,\, ec+ fd,\, ed+fc\},
\]
but then using that \(i(e)\), \(i(f)< i(z)\) and \(i(c)\), \(i(d)<i(y)\), as well as \(x\in V_0\), we find that
\[
zy+yx \geq \tfrac{1}{2}( ex+fx) =x z,
\]
which again contradicts \eqref{eq:disruption}. Therefore, the case \(x\in V_0\) cannot occur. 

To summarize, we have shown that \(x\), \(y\), \(z\in V\setminus V_0\). Let \(a\), \(b\in V\) denote the parents of \(x\). Recall that \(e\),\(f\) and \(c\), \(d\) denote the parents of \(z\) and \(y\), respectively. 

First, suppose that 
\begin{equation}\label{eq:first-possibility}
zy=\tfrac{1}{2} ey+\tfrac{1}{2}fy \quad \text{ or } \quad yx=\tfrac{1}{2} ay+\tfrac{1}{2}by.
\end{equation}
Hence, if \(zy=\tfrac{1}{2} ey+\tfrac{1}{2}fy\), then we have
\[
zy+yx=\tfrac{1}{2}(ey+yx)+\tfrac{1}{2}(fy+yx).
\]
But \(i(e)\), \(i(f)<i(z)\) and thus
\[
zy+yx \geq \tfrac{1}{2}ex+\tfrac{1}{2}fx \geq zx,
\]
which contradicts \eqref{eq:disruption}. Hence, the case \(zy=\tfrac{1}{2} ey+\tfrac{1}{2}fy\) cannot occur. By symmetry, the case \(yx=\tfrac{1}{2} ay+\tfrac{1}{2}by\) cannot occur as well. Hence, \eqref{eq:first-possibility} does not hold. Therefore, there exist \(u\), \(v\), \(p\), \(q\in V\) (which are not necessarily distinct) such that
\begin{equation}\label{eq:second-possibility}
zy=\tfrac{1}{2} uc+\tfrac{1}{2} vd \quad \text{and} \quad yx=\tfrac{1}{2} cp+\tfrac{1}{2} dq.
\end{equation}
Notice that \(\{u, v\}=\{z\}\) or \(\{u, v\}=\{e, f\}\), and analogously, \(\{p, q\}=\{x\}\) or \(\{p, q\}=\{a, b\}\). Consequently,
\[
zy+yx=\tfrac{1}{2}( uc+cp)+\tfrac{1}{2}(vd+dq).
\]
Now, since \(i(c)\), \(i(d) < i(y)\), we obtain
\[
zy+yx\geq \tfrac{1}{2} up+\tfrac{1}{2} vq.
\]
This implies that
\[
\tfrac{1}{2} up+\tfrac{1}{2} vq \geq zx,
\]
and so \(zy+yx\geq zx\), which contradicts \eqref{eq:disruption}. As a result, \eqref{eq:second-possibility} does not hold. But either \eqref{eq:first-possibility} or \eqref{eq:second-possibility} must necessarily be true. This is a contradiction and thus we obtain that points \(x\), \(y\), \(z\in V\) such that \eqref{eq:disruption} holds do not exit. This shows that \(d\) satisfies the triangle inequality on \(V\), and thus as it is symmetric and positive definite it defines a metric on \(V\), as desired. 
\end{proof}

\section{\((V, d)\) is not totally bounded}\label{sec:not-totally-bounded}
A metric space \(X\) is said to be \textit{totally bounded} if for every \(\epsilon>0\) there exists a finite subset \(F\susbet X\) such that 
\[
X=\bigcup_{x\in F} B(x, \epsilon).
\]
We recall that \(X\) is totally bounded if and only if its metric completion \(\olsi{X}\) is compact. In this section, we show by means of an explicit example that the metric space \(X=(V,d)\) constructed in the previous section is not totally bounded.

Recall that \(V_0=\{0, \ldots, j-1\}\), where \(j\) is a fixed positive integer. In the following, we suppose that \(j=2\). Clearly, if we show that \((V, d)\) is not totally bounded for \(j=2\), then \((V, d)\) is also not totally bounded if \(j>2\).

We abbreviate \(l=0\) and \(r=1\). Here, \(l\) stand for 'left' and \(r\) for 'right'. In the following, we construct a sequence \(y(n)\) of points in \(V\) that eventually moves far away from every point in \(V\). The definition of \(y(n)\) is recursive and involves two auxiliary 'triangular' sequences \(l(n, k)\) and \(r(n, k)\). We set
\[
l(1,1)=l \quad\text{ and }\quad r(1,1)=r
\]
as well as \(y(1)=m(l, r)\). Next, we define recursively, for \(n>1\) and \(k\in \{1, \ldots, n\}\),
\[
l(n, k)=
\begin{cases}
l& \text{if } k=1 \\
m\big(l(n-1, k), \,l(n, k-1)\big) & \text{if } 1 < k < n \\
m\big(y(n-1), l(n, n-1)\big) & \text{if } k=n
\end{cases}
\]
and analogously,
\[
r(n, k)=
\begin{cases}
r& \text{if } k=1 \\
m\big(r(n-1, k), \,r(n, k-1)\big) & \text{if } 1 < k < n \\
m\big(y(n-1), r(n, n-1)\big) & \text{if } k=n,
\end{cases}
\]
as well as
\[
y(n)=m(l(n,n), r(n, n)).
\]
See Figure~\ref{fig:drawing} for an illustration of the construction.
\begin{figure}[t]
    \centering
    \includegraphics[scale=1.5]{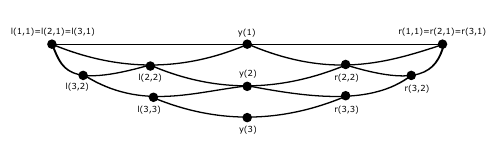}
    \caption{Schematic construction of the points \(y(1)\), \(y(2)\), and \(y(3)\).}
    \label{fig:drawing}
\end{figure} 
For example,
\begin{align*}
y(2)&=m\big(m(l,\,y(1)),\, m(r,\, y(1))\big) \\
y(3)&= m\big(m\big(m(l, \,l(2,2)), \,y(2)\big),\, m\big(y(2), m(r(2,2), r)\big)\big)
\end{align*}
It can be shown that the index of \(y(n)\) is equal to \(2n-1\). However, we will not need this and will therefore not prove it rigorously. It can be verified with a mathematical software of your choice that for \(n=3\), 
\begin{equation}\label{eq:some-conjecture}
d(y(n), y(1))=\max_{y\in M_{n}} \, d(y, y(1)), 
\end{equation}
where \(M_{n}\) is the set consisting of all points in \(V_{2n-1}\) that are contained in the midpoint set of \(l\) and \(r\), that is 
\[
M_{n}=\{y\in V_{2n-1} : \,  d(y, l)=d(y, r)=\tfrac{1}{2}\}.
\]
Presumably, \eqref{eq:some-conjecture} holds for all positive integers \(n\), and this is how we originally got interested in the sequence \(y(n)\). Notice that 
\[
d(y, y(1))\leq \tfrac{1}{2}d(y, l)+\tfrac{1}{2}d(y, r)=\tfrac{1}{2}
\]
for every \(y\in M_n\). Thus, by setting \(x=y(1)\) in the following proposition, we find that \(y(n)\) moves away from \(y(1)\) as far as possible.

\begin{proposition}\label{prop:main}
For every \(x\in V\),
\begin{equation}\label{eq:desired-limit}
\limsup_{n\to \infty} \,d(x, y(n)) \geq  \frac{1}{2}.
\end{equation}
\end{proposition}

Having this proposition at hand it is straightforward to show that \((V, d)\) is not totally bounded.

\begin{corollary}\label{cor:non-cpt}
\((V, d)\) is not totally bounded.
\end{corollary}

\begin{proof}
We define the sets \(F_n\) inductively as follows. We set \(F_1=\{y(1)\}\) and for \(n\geq 2\) we let \(F_n=\{y(n^\ast)\} \cup F_{n-1}\), where \(n^\ast\) is an integer such that
\[
d(x, y(n^\ast))> \tfrac{1}{4}
\]
for all \(x\in F_{n-1}\). The existence of such a point \(y(n^\ast)\) is guaranteed by Propositon~\ref{prop:main}, since \(F_{n-1}\) is a finite set. Let \(F\) be the countable union of the sets \(F_n\). Since \(F\) is infinite and \((1/4)\)-separated, it follows that \((V, d)\) is not totally bounded. 
\end{proof}

In the remainder of this section we prove Proposition~\ref{prop:main}. For each \(n\geq 1\) we set
\[
A_n=\big\{ y(n), \, l(n, k),\, r(n, k) : k\in\{1, \ldots, n\}\big\}.
\]
In particular, \(A_1=\{l, \, r, \, y(1)\}\) and
\[
A_2=\{l(1,1),\, l(2,2),\, y(2),\, r(2,2),\, r(1,1)\}.
\]
Now, let \(x\in V\) and \(y\in V\setminus V_0\) with parents \(c\) and \(d\). If
\[
d(x, y)=\tfrac{1}{2} d(x, c)+\tfrac{1}{2}d(x, d),
\]
then we say that \textit{\(x\) wins}. Moreover, we use the convention that “\(x\) wins” by default if \(y=l\) or \(y=r\).

\begin{definition}
Given \(x\in V\), we let \(n(x)\) denote the smallest positive integer such that for all 
\(y\) contained in \(\bigcup_{n\geq n(x)} A_n\), we have that “\(x\) wins” in the computation of \(d(x,y)\). If no such integer exists, we set \(n(x)=\infty\).
\end{definition} 

Clearly, \(n(l)=n(r)=1\). We will show by induction that \(n(x)\) is finite for every \(x\in V\).

\begin{lemma}\label{lem:main-aux}
Using the definitions from above, \(n(x)\) is finite for all \(x\in V\).
\end{lemma}

\begin{proof}

We say that \(E\subset V\) is \textit{downward closed} if whenever \(x\in E\) and \(a\), \(b\) denote the parents of \(x\), then \(a\), \(b\in E\) as well. In other words, using standard notation from set theory (see e.g. \cite[p. 9]{jech--2003}) and considering the elements of \(V_0\) as ur-elements, we have that \(E\) is downward closed if \(\bigcup E\subset E\). 

Let \(E\subset V\) be a finite subset and \(x\in V\setminus V_0\). Suppose that \(E\cup \{x\}\) is downward closed and 
\[
N=\max\limits_{x'\in E}\, n(x')
\]
is finite. We claim that 
\begin{equation}\label{eq:desired-bound}
n(x)\leq N+1.
\end{equation}
In the following, we use the shorthand notation \(xy=d(x, y)\). Let \(y\in A_n\) for some \(n\geq N+1\) and let \(c\), \(d\) denote the parents of \(y\). We use the convention that \(l\) and \(r\) are their own parents (that is, e.g. \(c=d=l\) if \(y=l\)) and we may suppose that \(lc \leq ld\). Let \(a\), \(b\in E\) denote the parents of \(x\). Thus,  since \(y\), \(c\), \(d\in A_n\cup A_{n-1}\) and \(n-1\geq N \geq  \max\{n(a),\, n(b)\}\), we find that
\begin{align*}
ya+yb&=\tfrac{1}{2}ac+\tfrac{1}{2}ad+\tfrac{1}{2}bc+\tfrac{1}{2}bd \\
&=\tfrac{1}{2}(ac+bd)+\tfrac{1}{2}(ad+bc).
\end{align*}
Let \(c_1\), \(c_2\), and \(d_1\), \(d_2\) denote the parents of \(c\) and \(d\), respectively. We may suppose that \(lc_1\leq lc_2\) and \(ld_1 \leq ld_2\). Now, the crucial observation is that by the recursive definition of \(l(n,k)\), \(r(n, k)\), \(y(n)\), it follows that \(c_2=d_1\). See Figure~\ref{fig:drawing}. Hence, we find that 
\begin{align*}
ac+bd&=\tfrac{1}{2}ac_1+\tfrac{1}{2}ac_2+\tfrac{1}{2}bd_1+\tfrac{1}{2}bd_2 \\
&=\tfrac{1}{2}( ac_1+ bc_2)+\tfrac{1}{2}(ad_1+ bd_2) \\
&\geq xc+xd
\end{align*}
and analogously,
\begin{align*}
ad+bc&=\tfrac{1}{2}ad_1+\tfrac{1}{2}ad_2+\tfrac{1}{2}bc_1+\tfrac{1}{2}bc_2 \\
&=\tfrac{1}{2}( ac_2+bc_1)+\tfrac{1}{2}( ad_2+bd_1)\\
&\geq xc+xd.
\end{align*}
By the above, this implies that \(xy=\frac{1}{2}(xc+xd)\), that is, “\(x\) wins”. Since \(n\geq N+1\) and \(y\in A_n\) were arbitrary, the desired inequality \eqref{eq:desired-bound} follows. 

Now, let \(x\in V\setminus V_0\) and let \(E\) denote the set of all ancestors of \(x\) (including \(x\) itself). Clearly, \(l\), \(r\in E\) and there exists an enumeration \(\{x_1, \ldots, x_m\}\) of \(E\) such that for every \(i\in \{1, \ldots, m\}\),  the set \(E_i=\{x_1, \ldots, x_i\}\) is downward closed. Using induction, it follows from the above that for all \(i\in\{1, \ldots, m\}\), we have that \(n(x')\) is finite for every \(x'\in E_i\). Hence, in particular, as \(x\in E_m\), it follows that \(n(x)\) is finite, as desired. 
\end{proof}

\begin{proof}[Proof of Proposition~\ref{prop:main}]
We set \(n_0=n(x)\). Lemma~\ref{lem:main-aux} tells us that \(n_0\) is a well-defined positive integer. To avoid some technicalities, we may suppose that \(n_0>2\). For \(n\geq n_0\) we set \(m=n_0+n\). By definition of \(n_0\), 
\begin{align*}
d(x, y(m))&=\frac{1}{2}d(x, l(m,m))+\frac{1}{2}d(x, r(m, m))\\
&=\frac{1}4\big( d(x, l(m, m-1))+2\cdot d(x, y(m-1))+d(x, r(m, m-1))\big).
\end{align*}
If \(a\), \(b\) denote the parents of \(y(m)\) and we use the notation \(aa\), \(ab\), and \(ba\), \(bb\) to denote the parents of \(a\) and \(b\), respectively, we find that
\begin{equation}\label{eq:lattice-path}
d(x, y(m))=\frac{1}{2^2}\big( d(x,aa)+d(x,ab)+d(x, ba)+d(x, bb)\big).
\end{equation}
A path \((v_0, \ldots, v_k)\) in \(\Z^2\) is called a \textit{north-east lattice path} if \(v_{i}-v_{i-1}\) is contained in \{(1, 0), (0, 1)\} for each \(i=1, \ldots, k\). As can be seen from \eqref{eq:lattice-path}, the distance \(d(x, y(m))\) can be encoded by considering north-east lattice paths in \(\Z^2\). 

Fix \(n\geq n_0\) and let \(\mathcal{P}\) denote the set of north-east lattice paths in \(\Z^2\) of length \(2n\) starting from \((0,0)\). We define \(f\colon \mathcal{P}\to V\) by setting 
\[
f(\alpha)=z_{2n},
\]
where \(\alpha=(v_0, \ldots, v_{2n})\in \mathcal{P}\) and \(z_0, \ldots, z_{2n}\in V\) are points defined inductively by setting \(z_0=y(m)\) and for \(i\in\{0, \ldots, 2n-1\}\),
\[
z_{i+1}=
\begin{cases}
a_{i} & \text{ if } v_{i+1}-v_{i}=(0,1) \\
b_{i} & \text{ if } v_{i+1}-v_{i}=(1,0),
\end{cases}
\]
 where \(a_{i}\), \(b_{i}\) are the parents of \(z_{i}\) with the convention that \(d(0, a_{i})\leq d(0, b_{i})\) and \(a_{i}=b_{i}=z_{i}\) if \(z_{i}=l\) or \(z_{i}=r\). By the above,
\[
d(x, y(m))=\frac{1}{\abs{\mathcal{P}}} \sum_{\alpha\in \mathcal{P}} d(x, f(\alpha)).
\]
We observe that \(f(\alpha)\) only depends on the endpoint of \(\alpha\). For \(k\in \{-n, \ldots, n\}\), let \(\mathcal{P}_k\subset \mathcal{P}\) denote the set of all paths in \(\mathcal{P}\) that have \((n+k, n-k)\) as endpoint. Clearly,
\[
f(\mathcal{P})=\bigcup_{k=-n}^n f(\mathcal{P}_k).
\]
It is easy to check that \(f(\mathcal{P}_{0})=\{y(n_0)\}\). Moreover, since \(2n \geq m\), it follows that for all \(k\in\{1, \ldots, n_0-1\}\),
\[
f(\mathcal{P}_{k})=\{\,l\big(n_0+k,\, n_0-k+1\big)\},
\]
and for all \(k\in\{n_0, \ldots, n\}\),
\[
f(\mathcal{P}_{k})=\{l\}.
\]
By symmetry, the analogous results also hold for \(k\in \{-1, \ldots, -n\}\) with \(l\) replaced by \(r\). Hence,
\begin{align}
d(x, y(m))&\geq \frac{1}{\abs{\mathcal{P}}}\sum_{k=n_0}^n \,\abs{\mathcal{P}_k}\cdot d(x, l)+\frac{1}{\abs{\mathcal{P}}}\sum_{k=-n}^{-n_0}\, \abs{\mathcal{P}_k} \cdot d(x, r) \nonumber \\
&=\frac{1}{2}\Big(1-\frac{1}{4^n}\sum_{k=-n_0+1}^{n_0-1} \abs{\mathcal{P}_k}\Big), \label{eq:lower-bound}
\end{align}
where we used that \(\abs{\mathcal{P}}=2^{2n}=4^n\) and \(d(x,l)+d(x,r)\geq d(l, r)=1\).
Notice that \(\abs{\mathcal{P}_k}=\binom{2n}{n-k}\), see e.g. \cite[footnote on p. 260]{koshy--2009}, and thus \(\abs{\mathcal{P}_k}\leq \abs{\mathcal{P}_0}=\binom{2n}{n}\). Using Stirling’s formula, we find that \(\binom{2n}{n}\) is asymptotically equal to \(4^n\cdot (n\pi)^{-1/2}\). Thus, it follows from \eqref{eq:lower-bound} that
\[
\limsup_{m\to \infty} \, d(x, y(m)) \geq \frac{1}{2},
\]
as desired. 
\end{proof}

\section{Conical midpoint maps}\label{sec:conical-midpoint} 
In this section we introduce conical midpoint maps and derive some of their basic properties. Let \(X=(X, d)\) be a metric space. We use \(\olsi{X}\) to denote the metric completion of \(X\). If readability demands it we will sometimes tacitly identify \(X\) with its canonical isometric copy in \(\olsi{X}\). Any conical midpoint map on a metric space \(X\) induces a conical bicombing on \(\olsi{X}\). This is discussed at the end of this section. 

\begin{definition}\label{def:conical-midpoint-map}
We say that \(m\colon X\times X \to X\) is a \textit{conical midpoint map} if for all \(x\), \(y\), \(z\in X\), the following holds:
\begin{enumerate}
\item\label{it:ax1} \(m(x,x)=x\),
\item\label{it:ax2} \(m(x,y)=m(y,x)\),
\item\label{it:ax3} \(d(m(x,y), m(x, z)) \leq \frac{1}{2} d(y,z)\).
\end{enumerate}
\end{definition}

We remark that for midpoints in Euclidean space, the inequality in \eqref{it:ax3} becomes an equality. 
It is easy to see that if \(m\) is as in Definition~\ref{def:conical-midpoint-map}, then \(z=m(x_1, x_2)\) is a midpoint of \(x_1\) and \(x_2\). Furthermore, \eqref{it:ax3} can be upgraded to a more general inequality involving four points. Indeed, for all \(x\), \(y\), \(x'\), \(y'\in X\), it holds
\begin{equation}\label{eq:improved-inequality}
d(m(x,y), m(x', y')) \leq \tfrac{1}{2} d(x,x')+\tfrac{1}{2} d(y,y').
\end{equation}
In what follows, we show that conical midpoint maps induce conical bicombings in a natural way. The used recursive construction is well-known and goes back to Menger (see \cite[Section 6]{menger-1928}). 

Let \(m\) be a concial midpoint map on \(X\) and \(x\), \(y\in X\).  Further, let \(\mathcal{G}_n= (2^{-n}\cdot \Z) \cap [0,1]\), where \(n\geq 0\), be the \(2^{-n}\)-grid in \([0,1]\). We define \(\sigma_{xy}\colon \bigcup \mathcal{G}_n\to X\) recursively as follows. We set \(\sigma_{xy}(0)=x\), \(\sigma_{xy}(1)=y\) and if \(t\in \mathcal{G}_{n}\setminus \mathcal{G}_{n-1}\), then we set
\[
\sigma_{xy}(t)=m(\sigma_{xy}(r), \sigma_{xy}(s)),
\]
where \(r\), \(s\in \mathcal{G}_{n-1}\) are the unique points such that \(t=\frac{1}{2}r+\frac{1}{2}s\) and \(\abs{r-s}=2^{-(n-1)}\). 

\begin{lemma}\label{lem:partial-step-construction-of-induced-bicombing}
The map \(\sigma_{xy}\) extends uniquely to a geodesic \(\olsi{\sigma}_{xy}\colon [0,1]\to \olsi{X}\). Moreover,
\begin{equation}\label{eq:conical-2}
d(\olsi{\sigma}_{xy}(t), \olsi{\sigma}_{x'y'}(t)) \leq (1-t) d(x, x')+t\, d(y, y')
\end{equation}
for all \(x\), \(y\), \(x'\), \(y'\in X\) and all \(t\in [0,1]\).
\end{lemma}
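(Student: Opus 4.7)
My plan is to first show that $\sigma_{xy}$ is already a ``geodesic on the dyadics'' in the sense that it is isometric (up to scaling by $d(x,y)$) on $\bigcup_n \mathcal{G}_n$; this will immediately give the existence and uniqueness of the continuous extension $\overline{\sigma}_{xy}$. Then I would establish the conical inequality \eqref{eq:conical-2} first on $\bigcup_n \mathcal{G}_n$ by induction on $n$, using the four-point inequality \eqref{eq:improved-inequality}, and finally pass to all $t \in [0,1]$ by continuity.

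For the first part, I would prove by induction on $n$ the estimate $d(\sigma_{xy}(r), \sigma_{xy}(s)) \leq 2^{-n} d(x,y)$ for adjacent points $r, s \in \mathcal{G}_n$. The case $n=0$ is trivial. For the induction step, note that if $r, s \in \mathcal{G}_n$ satisfy $|r-s|=2^{-n}$ then necessarily exactly one of them lies in $\mathcal{G}_{n-1}$ (since $\mathcal{G}_{n-1}$ and $\mathcal{G}_n \setminus \mathcal{G}_{n-1}$ alternate in $\mathcal{G}_n$); say $r \in \mathcal{G}_{n-1}$ and $s \in \mathcal{G}_n \setminus \mathcal{G}_{n-1}$. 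Then $s = \tfrac{1}{2}(r + r')$ for some $r' \in \mathcal{G}_{n-1}$ with $|r-r'| = 2^{-(n-1)}$, and the definition of $\sigma_{xy}$ together with axiom \eqref{it:ax3} of Definition~\ref{def:conical-midpoint-map} yields
\[
d(\sigma_{xy}(r), \sigma_{xy}(s)) = d\bigl(m(\sigma_{xy}(r), \sigma_{xy}(r)), m(\sigma_{xy}(r), \sigma_{xy}(r'))\bigr) \leq \tfrac{1}{2} d(\sigma_{xy}(r), \sigma_{xy}(r')) \leq 2^{-n} d(x,y).
\]
By the triangle inequality and this bound, $d(\sigma_{xy}(r), \sigma_{xy}(s)) \leq |r-s|\, d(x,y)$ for all $r, s \in \bigcup_n \mathcal{G}_n$. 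Splitting $[0,1]$ into $2^n$ pieces and using $d(x,y) \leq \sum d(\sigma_{xy}(k 2^{-n}), \sigma_{xy}((k+1)2^{-n}))$ forces equality throughout, so $\sigma_{xy}$ is an isometric embedding of $\bigcup_n \mathcal{G}_n$ (with the metric scaled by $d(x,y)$) into $X$. Since $\bigcup_n \mathcal{G}_n$ is dense in $[0,1]$ and $\overline{X}$ is complete, $\sigma_{xy}$ extends uniquely to a continuous, in fact isometric (after scaling), map $\overline{\sigma}_{xy} \colon [0,1] \to \overline{X}$, which is a geodesic from $x$ to $y$.

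For the conical inequality, I would prove by induction on $n$ that \eqref{eq:conical-2} holds for all $t \in \mathcal{G}_n$. The base case $n=0$ is immediate from $t \in \{0,1\}$. For the induction step, take $t \in \mathcal{G}_n \setminus \mathcal{G}_{n-1}$ and write $t = \tfrac{1}{2}(r+s)$ with $r, s \in \mathcal{G}_{n-1}$ and $|r-s| = 2^{-(n-1)}$. Applying \eqref{eq:improved-inequality} and the inductive hypothesis gives
\[
d(\sigma_{xy}(t), \sigma_{x'y'}(t)) \leq \tfrac{1}{2} d(\sigma_{xy}(r), \sigma_{x'y'}(r)) + \tfrac{1}{2} d(\sigma_{xy}(s), \sigma_{x'y'}(s)) \leq (1-t)\, d(x,x') + t\, d(y,y'),
\]
using $1 - \tfrac{r+s}{2} = \tfrac{1}{2}(1-r) + \tfrac{1}{2}(1-s)$. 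Once the conical inequality is established on the dense subset $\bigcup_n \mathcal{G}_n$, it extends to all $t \in [0,1]$ by the continuity of $\overline{\sigma}_{xy}$ and $\overline{\sigma}_{x'y'}$.

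I do not expect any real obstacle here; the argument is a fairly standard Menger-style dyadic construction, and the only step requiring mild care is the case analysis verifying that adjacent points in $\mathcal{G}_n$ split as ``old--new''. The only ingredient beyond pure bookkeeping is the upgrade from axiom \eqref{it:ax3} to the four-point bound \eqref{eq:improved-inequality}, which has already been derived above the lemma.
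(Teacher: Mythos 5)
Your proposal is correct and follows essentially the same route as the paper: a Menger-style dyadic induction to show that $\sigma_{xy}$ is isometric (after scaling) on $\bigcup_n \mathcal{G}_n$, followed by an induction on the dyadic level using the four-point inequality \eqref{eq:improved-inequality} to establish \eqref{eq:conical-2} on the dyadics, and a density argument to close. The only cosmetic difference is that you first establish the Lipschitz bound on \emph{adjacent} dyadics and then chain, whereas the paper proves the bound directly for arbitrary $t_1, t_2 \in \mathcal{G}_n$ by decomposing both as midpoints of $\mathcal{G}_{n-1}$-points; both ultimately force equality by the same $x \to \sigma_{xy}(t_1) \to \sigma_{xy}(t_2) \to y$ triangle-inequality chain.
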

\begin{proof}
To begin, we show by induction that \(\sigma_{xy}|_{\mathcal{G}_n}\) is an isometric embedding. This is clearly true for \(n=0\). Now, fix \(t_i\in \mathcal{G}_{n}\), \(i=1,2\) and let \(r_i\), \(s_i\in \mathcal{G}_{n-1}\) with \(s_i \leq r_i\) be such that \(t_i=\frac{1}{2}s_i+\frac{1}{2}r_i\) and \(\sigma_{xy}(t_i)=m(\sigma_{xy}(s_i), \sigma_{xy}(r_i))\). Without loss of generality, we may suppose that \(t_1 \leq t_2\).  Using the triangle inequality, we get
\begin{align*}
d(\sigma_{xy}(t_1), \sigma_{xy}(t_2)) &\leq d(\sigma_{xy}(t_1), \sigma_{xy}(r_1))+d(\sigma_{xy}(r_1), \sigma_{xy}(s_2))\\
&\phantom{\leq}+d(\sigma_{xy}(s_2), \sigma_{xy}(t_2)),
\end{align*}
and so, by the induction hypothesis and because \(m\) is a midpoint map,
\begin{align*}
d(\sigma_{xy}(t_1), \sigma_{xy}(t_2)) &\leq \big(\frac{r_1-s_1}{2}+\abs{s_2-r_1}+\frac{r_2-s_2}{2}\big)d(x,y).
\end{align*}
But, since \(t_1 \leq t_2\), we have \(r_1 \leq s_2\). Hence, by the above, \(d(\sigma_{xy}(t_1), \sigma_{xy}(t_2)) \leq \abs{t_1-t_2} d(x,y)\). As a result,
\begin{align*}
d(x, y) &\leq d(x, \sigma_{xy}(t_1))+d(\sigma_{xy}(t_1), \sigma_{xy}(t_2))+d(\sigma_{xy}(t_2), y) \\
&\leq \big( t_1+\abs{t_1-t_2}+\abs{t_2-1}\big)d(x, y).
\end{align*}
This implies that \(d(\sigma_{xy}(t_1), \sigma_{xy}(t_2))=\abs{t_1-t_2} d(x,y)\), and so \(\sigma_{xy}|_{\mathcal{G}_{n}}\) is an isometric embedding, as claimed. Hence, \(\sigma_{xy}\) can be uniquely extended to an isometric embedding \(\olsi{\sigma}_{xy}\colon[0,1]\to \olsi{X}\). Next, we show \eqref{eq:conical-2}. Clearly,
\[
2\cdot d(\olsi{\sigma}_{xy}(1/2), \olsi{\sigma}_{x' y'}(1/2)) \leq  d(x, x')+ d(y, y'),
\]
as \(\olsi{\sigma}_{xy}(1/2)=m(x, y)\), \(\olsi{\sigma}_{x'y'}(1/2)=m(x', y')\) and  \(m\) is conical midpoint map and thus satisfies \eqref{eq:improved-inequality}. We now proceed by induction and show that if \eqref{eq:conical-2} is valid for all \(t\in \mathcal{G}_{n-1}\), then it is also valid for all \(t\in \mathcal{G}_{n}\). Fix \(t\in \mathcal{G}_n\) and let \(r\), \(s\in \mathcal{G}_{n-1}\) be the unique points with \(s \leq r\) such that \(t=\frac{1}{2}s+\frac{1}{2}r\). We compute
\begin{align*}
2\cdot d(\olsi{\sigma}_{xy}(t), \olsi{\sigma}_{x' y'}(t)) &\leq d(\olsi{\sigma}_{xy}(s), \olsi{\sigma}_{x' y'}(s))+ d(\olsi{\sigma}_{xy}(r), \olsi{\sigma}_{x' y'}(r)) \\
& \leq ( (1-s)+ (1-r) )d(x, x')+( s+r) d(y, y');
\end{align*}
hence, \eqref{eq:conical-2} holds for all \(t\in \mathcal{G}_n\). Since \(\bigcup \mathcal{G}_n\) is a dense subset of \([0,1]\) and \(\olsi{\sigma}_{xy}\) and \(\olsi{\sigma}_{x' y'}\) are geodesics, \eqref{eq:conical-2} is valid for all \(t\in [0,1]\). 
\end{proof}

Thus, we have constructed a map \(\olsi{\sigma} \colon X\times X \times [0,1]\to \olsi{X}\) such that \eqref{eq:conical} holds for all geodesics \(\olsi{\sigma}_{xy}\) and \(\olsi{\sigma}_{x'y'}\). Now, given \(x\), \(y\in \olsi{X}\), we set
\[
\olsi{\sigma}_{xy}(t)=\lim_{n\to \infty} \olsi{\sigma}_{x_n y_n}(t)
\]
where \(x_n\), \(y_n\in X\) are points such that \(x_n \to x\) and \(y_n \to y\) as \(n\to \infty\), respectively. It follows that \(\olsi{\sigma}\) is a well-defined reversible conical bicombing on \(\olsi{X}\). We call \(\olsi{\sigma}\) the \textit{bicombing induced by \(m\)}. We point out that \(m\) is defined on an arbitrary metric space \(X\) but \(\olsi{\sigma}\) is always a bicombing on \(\olsi{X}\).

We conclude this section by giving a description of \(\sigma\)-convex hulls in terms of \(m\). For any \(A\subset X\), we set \(\mathcal{M}_1(A)=\big\{ m(a, a') : a, a'\in A\big\}\) and 
\[
\mathcal{M}_n(A)=\mathcal{M}_1(\mathcal{M}_{n-1}(A))
\]
for all \(n\geq 2\). We let \(\convm(A) \subset \olsi{X}\) denote the closure of the countable union of the sets \(\mathcal{M}_n(A)\). We emphasize that the closure is taken in \(\olsi{X}\) and not in \(X\). The following result shows that this construction recovers the closed \(\sigma\)-convex hull of \(A\).

\begin{lemma}\label{lem:induced-bicombing}
Let \(m\) be a conical midpoint map on a metric space \(X\) and suppose \(\sigma\) denotes the bicombing on \(\olsi{X}\) induced by \(m\). Then
\[
\conv(A)=\convm(A)
\]
for all \(A\subset X\). 
\end{lemma}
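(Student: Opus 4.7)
The plan is to prove the two inclusions separately, using throughout the identity $\overline{\sigma}_{xy}(1/2) = m(x, y)$ for $x, y \in X$ that is built into the recursive construction of $\overline{\sigma}$ in Lemma~\ref{lem:partial-step-construction-of-induced-bicombing}.

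For the inclusion $m\convm(A) \subset \conv(A)$, I would show by induction that $\mathcal{M}_n(A) \subset \conv(A)$ for every $n \geq 1$. The base case follows because $\conv(A)$ is $\sigma$-convex, so for $a, a' \in A$ the geodesic $\overline{\sigma}_{aa'}$ lies in $\conv(A)$, and in particular $m(a, a') = \overline{\sigma}_{aa'}(1/2) \in \conv(A)$. The inductive step repeats the same argument with $\mathcal{M}_{n-1}(A) \subset \conv(A) \cap X$ in place of $A$. Taking closures and using that $\conv(A)$ is itself closed yields the desired inclusion.

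For the reverse inclusion, the strategy is to show that $m\convm(A)$ is a closed $\sigma$-convex subset of $\overline{X}$ containing $A$, so that $\conv(A) \subset m\convm(A)$ by minimality. Closedness is part of the definition, and $A \subset \mathcal{M}_1(A)$ follows from axiom \eqref{it:ax1}, which yields $a = m(a, a)$. To verify $\sigma$-convexity, I would first extend $m$ by continuity to a map $\bar{m}\colon \overline{X} \times \overline{X} \to \overline{X}$ with $\bar{m}(x, y) = \overline{\sigma}_{xy}(1/2)$; this extension is well defined thanks to the Lipschitz estimate \eqref{eq:improved-inequality}. I would then check that $\bar{m}$ sends $m\convm(A) \times m\convm(A)$ into $m\convm(A)$: given $x, y \in m\convm(A)$, pick approximating sequences $x_k, y_k \in \bigcup_n \mathcal{M}_n(A)$, and using that $\mathcal{M}_n(A) \subset \mathcal{M}_{n+1}(A)$ (also from \eqref{it:ax1}) assume both lie in a common level $\mathcal{M}_{N_k}(A)$; then $m(x_k, y_k) \in \mathcal{M}_{N_k + 1}(A)$ and $\bar{m}(x, y) = \lim_k m(x_k, y_k) \in m\convm(A)$.

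Finally, the same recursion that defines $\overline{\sigma}_{xy}$ on the dyadic grid in the proof of Lemma~\ref{lem:partial-step-construction-of-induced-bicombing}, now carried out with $\bar{m}$ in place of $m$ (which is valid by continuity of $\bar{m}$), expresses $\overline{\sigma}_{xy}(t)$ as an iterated $\bar{m}$-midpoint of $x$ and $y$ for every dyadic $t \in \bigcup_n \mathcal{G}_n$. Combined with the previous paragraph this gives $\overline{\sigma}_{xy}(t) \in m\convm(A)$ for every such $t$, and the closedness of $m\convm(A)$ together with continuity of $\overline{\sigma}_{xy}$ extends this to all $t \in [0, 1]$. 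The main technical hurdle is the construction of $\bar{m}$ and verifying it respects the stratification by the levels $\mathcal{M}_n(A)$; beyond that, the proof is a direct reading of the construction of $\overline{\sigma}$.
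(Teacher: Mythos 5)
Your proof is correct and follows the same overall decomposition as the paper: the easy inclusion \(m\convm(A)\subset\conv(A)\), then showing that the closed set \(m\convm(A)\) is \(\sigma\)-convex so that minimality gives the reverse inclusion. The only difference lies in how the closure is handled in the second part. The paper first verifies \(\sigma\)-convexity on the dense subset \(\bigcup_n\mathcal M_n(A)\) directly (noting that \(\sigma_{xy}(\mathcal G_\ell)\subset\mathcal M_{n+\ell}(A)\) for \(x,y\in\mathcal M_n(A)\), so the whole geodesic lands in \(m\convm(A)\)), and then takes the limit \emph{once at the end}, using that the conical inequality forces \(\sigma_{x_ky_k}\to\sigma_{xy}\) uniformly when \(x_k\to x\), \(y_k\to y\). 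You instead take the limit \emph{step by step}, by first extending \(m\) to a Lipschitz map \(\bar m\) on \(\overline X\times\overline X\), checking that \(\bar m\) preserves \(m\convm(A)\), and then re-running the dyadic recursion with \(\bar m\). Both arguments work; the paper's route is a bit leaner because it needs no separate extension lemma for \(m\), while yours makes the invariance of \(m\convm(A)\) under midpoints completely explicit. One small point worth making precise in your version is that \(\overline\sigma_{xy}(t)\) for \(x,y\in\overline X\) and dyadic \(t\) really is the iterated \(\bar m\)-midpoint: this follows from the definition \(\overline\sigma_{xy}(t)=\lim_k\overline\sigma_{x_ky_k}(t)\) together with continuity of \(\bar m\), by induction on the grid level, which is implicit but not spelled out in your sketch.
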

\begin{proof}
Clearly, \(\convm (A)\subset \conv(A)\). Thus, it suffices to show that the closed set \(\convm (A)\) is \(\sigma\)-convex. To this end, let \(n\geq 1\) and let \(x\), \(y\in \mathcal{M}_n(A)\). Let the \(2^{-s}\)-grid \(\mathcal{G}_s\), for \(s\geq 1\), be defined as above. By construction of \(\sigma\), it follows that \(\sigma_{xy}(\mathcal{G}_s)\subset \mathcal{M}_{n+s}(A)\). Hence, \(\sigma_{xy}([0,1])\subset \convm(A)\). Now, suppose that \(x\), \(y\in \convm(A)\). There exist points \(x_k\), \(y_k\in \mathcal{M}_{n_k}(A)\) such that \(x_k \to x\) and \(y_k \to y\) as \(k\to \infty\), respectively. Moreover, \(\sigma_{x_k y_k}\to \sigma_{xy}\) uniformly. This implies that \(\sigma_{xy}([0,1])\subset \convm(A)\), and so \(\convm(A)\) is \(\sigma\)-convex.    
\end{proof}

\section{Proof of main results}\label{sec:proof-of-main-theorems}

In this section we prove the main results from the introduction. Theorem~\ref{thm:non-cpmpactness} is an immediate consequence of the following result.

\begin{theorem}\label{thm:Z-non-cpt}
Let \(j\) be a positive integer and let \(X\) be the completion of the metric space \((V, d)\) constructed in Section~\ref{sec:appending-midpoints}. Then \(X\) admits a conical bicombing \(\sigma\) and there is a finite subset \(A\subset X\) consisting of \(j\) points such that \(\conv(A)=X\). Moreover, \(X\) is non-compact for every \(j>1\).
\end{theorem}

\begin{proof}
Let \((V, d)\) be the metric space constructed in Section~\ref{sec:appending-midpoints}. By definition of \(d\), it follows that \(m\colon V\times V\to V\) defined by \((a,b)\mapsto m(a,b)\) is a conical midpoint map. Thus, it follows from the results of Section~\ref{sec:conical-midpoint} that \(X=\olsi{V}\) admits a conical bicombing \(\sigma\). We set \(A=V_0\). Clearly, \(A\) consists of \(j\) points and 
\[
\bigcup_{n\geq 1} \mathcal{M}_n(A)=V.
\]
Hence, it follows from Lemma~\ref{lem:induced-bicombing} that \(\conv(A)=X\). Moreover, by Corollary~\ref{cor:non-cpt} we know that \((V, d)\) is not totally bounded if \(j> 1\). Hence, \(X=\conv(A)\) is non-compact for every \(j>1\), as desired. 
\end{proof}

A metric space \(Y\) is called \textit{injective} if whenever \(A\subset X\) are metric spaces and \(f\colon A\to Y\) a \(1\)-Lipschitz map, then there exists a \(1\)-Lipschitz extension \(F\colon X\to Y\) of \(f\). Injective metric spaces have been introduced by Aronszajn and Panitchpakdi in \cite{aronszajn-1956} and are sometimes also called hyperconvex metric spaces by some authors. We refer to \cite{espinola-2001, lang-2013} for basic properties of these spaces. As observed by Lang in \cite[Proposition~3.8]{lang-2013}, every injective metric spaces admits a conical bicombing. Indeed, given an injective metric space \(Y\), by applying Kuratowski's embedding theorem, we may suppose that \(Y\subset C_b(Y)\), and so because \(Y\) is injective, there is a \(1\)-Lipschitz retraction \(r\colon C_b(Y) \to Y\) and thus
\[
\sigma(x, y, t)=r((1-t)x+t y)
\]
defines a conical bicombing on \(Y\). Using an extension result of \cite{basso2020extending}, we find that Theorem~\ref{thm:non-cpmpactness} is also valid for an injective metric space.

\begin{theorem}\label{thm:extension-to-injective-metric-spaces}
There exists an injective metric space \(Y\) that admits a reversible conical bicombing \(\sigma\) such that \(Y\) has a finite subset whose closed \(\sigma\)-convex hull is non-compact. 
\end{theorem}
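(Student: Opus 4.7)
The plan is to embed the space $X_0$ from Theorem~\ref{thm:X_0-non-cpt} isometrically into an injective metric space $Y$ in such a way that the conical bicombing $\sigma$ on $X_0$ extends to a conical bicombing $\tilde\sigma$ on $Y$ whose restriction agrees with $\sigma$ on $X_0\times X_0 \times [0,1]$. Once such an $(Y,\tilde\sigma)$ is produced, take $A\susbet X_0 \susbet Y$ to be the finite set from Theorem~\ref{thm:X_0-non-cpt}. Every closed $\tilde\sigma$-convex subset of $Y$ containing $A$ intersects $X_0$ in a closed $\sigma$-convex subset of $X_0$ containing $A$, and therefore contains $\conv(A)=X_0$ by Theorem~\ref{thm:X_0-non-cpt}. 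Thus the closed $\tilde\sigma$-convex hull of $A$ in $Y$ contains $X_0$ isometrically, and is in particular non-compact.

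For the construction of $Y$, the natural candidate is the injective hull $E(X_0)$ of $X_0$ in the sense of Isbell, which is a canonical injective metric space into which $X_0$ embeds isometrically. Alternatively, one could use any injective space containing an isometric copy of $X_0$ (for instance, a suitable retract of $C_b(X_0)$); the choice does not matter for the argument, provided that the bicombing can be extended.

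The crucial step is the extension of $\sigma$ to a conical bicombing $\tilde\sigma$ on $Y$. This is precisely the content of the extension theorem of \cite{basso2020extending} referenced in the discussion preceding the statement: given a complete metric space $X$ with a conical bicombing $\sigma$ and an isometric embedding $X\hookrightarrow Y$ of $X$ into an injective metric space $Y$, one obtains a conical bicombing $\tilde\sigma$ on $Y$ extending $\sigma$. Since $X_0$ is complete and $\sigma$ is a conical bicombing on $X_0$, applying this extension result to the inclusion $X_0 \hookrightarrow E(X_0)$ directly produces the desired pair $(Y,\tilde\sigma)$.

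The main potential obstacle is verifying that the extension theorem truly yields a bicombing $\tilde\sigma$ on $Y$ whose values on $X_0\times X_0\times [0,1]$ coincide with those of $\sigma$, rather than merely some (a priori different) conical bicombing on $Y$ whose restriction happens to lie in $X_0$. This property is, however, precisely what the word \emph{extension} in the statement of the theorem from \cite{basso2020extending} is meant to capture, so invoking that result concludes the argument. No further estimates are required, since the non-compactness is inherited from Theorem~\ref{thm:X_0-non-cpt}.
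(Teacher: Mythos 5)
Your overall strategy is the same as the paper's: apply the extension theorem from~\cite{basso2020extending} to produce an injective metric space $Y\supset X_0$ with a conical bicombing $\tilde\sigma$ extending $\sigma$, then observe that the closed $\tilde\sigma$-convex hull of $A=V_1$ in $Y$ contains $X_0$ and is therefore non-compact by Theorem~\ref{thm:X_0-non-cpt}. Your argument that any closed $\tilde\sigma$-convex subset of $Y$ containing $A$ meets $X_0$ in a closed $\sigma$-convex subset containing $A$ is correct and gives the conclusion.

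However, there is one missing verification. The result you are invoking, \cite[Theorem~1.2]{basso2020extending}, requires the bicombing $\sigma$ on $X_0$ to be \emph{reversible}, i.e.\ $\sigma_{xy}(t)=\sigma_{yx}(1-t)$, not merely conical. Your proposal quotes the theorem as applying to an arbitrary conical bicombing, which is not its actual hypothesis. The paper's proof explicitly checks reversibility before invoking the extension theorem: since the conical midpoint map $m$ is symmetric by axiom~(2) of Definition~\ref{def:conical-midpoint-map}, the induced bicombing satisfies $\sigma_{xy}(t)=\sigma_{yx}(1-t)$ on dyadic rationals by the recursive construction, hence everywhere by continuity. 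This is easy, but it has to be said; without it the application of the extension theorem is unjustified. A secondary, minor point: the extension theorem supplies \emph{some} injective space $Y$ containing $X_0$ together with the extended bicombing; it does not assert that one can prescribe $Y=E(X_0)$ in advance. Since any such $Y$ works for your argument, this does not affect correctness, but the phrasing suggesting the injective hull is the object produced by the theorem should be adjusted.
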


\begin{proof}
Let \(j> 1\) and let \(X\) be the completion of the metric space \((V, d)\) constructed in Section~\ref{sec:appending-midpoints}. We recall that \(V\) is naturally equipped with a conical midpoint map \(m\). Let \(\sigma\) denote the conical bicombing on \(X\) induced by \(m\). As \(m\) is symmetric, it follows that \(\sigma_{xy}(t)=\sigma_{yx}(1-t)\) for all \(x\), \(y\in X\). This shows that \(\sigma\) is a reversible conical bicombing. Hence, by virtue of \cite[Theorem~1.2]{basso2020extending}, there exists an injective metric space \(Y\) containing \(X\), and a conical bicombing \(\widetilde{\sigma}\) on \(Y\) such that \(\widetilde{\sigma}_{xy}=\sigma_{xy}\) for all \(x\), \(y\in X\). By looking at the proof of \cite[Theorem~1.2]{basso2020extending}, it is not difficult to see that \(\widetilde{\sigma}\) is reversible. As \(X\) is complete, it follows that \(\convt(A)=\conv(A)\) for any \(A\subset X\). Therefore, due to Theorem~\ref{thm:Z-non-cpt}, \(Y\) admits a finite subset whose closed \(\widetilde{\sigma}\)-convex hull is non-compact.  
\end{proof}

We finish this section by proving the following more general version of Theorem~\ref{thm:universal}.

\begin{theorem}\label{thm:universal-with-general-targets}
Let \(j\) be a positive integer and let \(X\) be the completion of the metric space \((V, d)\) constructed in Section~\ref{sec:appending-midpoints}. Then whenever \(Y\) is a complete metric space with a conical midpoint map \(m_Y\) and \(A\subset Y\) contains at most \(j\) points, then there exists a Lipschitz map \(\Phi\colon X\to Y\) such that \(A\subset \Phi(X)\) and furthermore \(\Phi(X)\) is \(\sigma_Y\)-convex with respect to the conical bicombing \(\sigma_Y\) induced by \(m_Y\).
\end{theorem}

\begin{proof}
We define the map \(\Phi\colon V \to Y\) as follows. We let \(\Phi|_{V_0}\) be any surjection onto \(A\), and we define \(\Phi\) recursively by setting \(\Phi(x)=m_Y(\Phi(a), \Phi(b))\) for \(x\in V\setminus V_0\) with parents \(a\), \(b\). In the following, we show by induction that \(\Phi\) is \(L\)-Lipschitz for \(L=\diam(A)\). 

Recall that we use \(i(x)\) to denote the index of \(x\in V\). Let \(\phi \colon V\times V \to \R\) be defined by \(\phi(x,y)=i(x)+i(y)\) and let \(R_n\) denote the union of \(\phi^{-1}(k)\) for \(k=0, \ldots, n\). Now, let \((x,y)\in V\times V\) and consider the inequality
\begin{equation}\label{eq:inductive-bound-Rn}
d(\Phi(x), \Phi(y))\leq L\cdot d(x, y).
\end{equation}
Clearly, this holds for all \((x,y)\in \phi^{-1}(0)\). We show in the following that \eqref{eq:inductive-bound-Rn} is valid for all \((x, y)\in R_n\) provided it holds for all points in \(R_{n-1}\). Let \((x, y)\in \phi^{-1}(n)\). By construction of \(d\), there exist \((u, v)\), \((p, q)\in R_{n-1}\) such \(m(u,v)=x\) and \(m(p, q)=y\), as well as
\[
d(x, y)=\frac{1}{2} d(u, p)+\frac{1}{2} d(v, q).
\]
Thus, by construction of \(\Phi\), we have that \(m_Y(\Phi(u), \Phi(v))=\Phi(x)\) and \(m_Y(\Phi(p), \Phi(q))=\Phi(y)\), and thus using that \(m_Y\) is a conical midpoint map,
\[
d(\Phi(x), \Phi(y))\leq \frac{1}{2} d(\Phi(u), \Phi(p))+\frac{1}{2} d(\Phi(v), \Phi(q)).
\]
Hence, because we assume that \eqref{eq:inductive-bound-Rn} is valid for all points in \(R_{n-1}\), we get
\[
d(\Phi(x), \Phi(y)) \leq \frac{L}{2} d(u, p)+\frac{L}{2} d(v, q)=L\cdot d(x,y).
\]
Since \(R_n=\phi^{-1}(n)\cup R_{n-1}\), we find that \eqref{eq:inductive-bound-Rn} is valid for all \((x, y)\in R_n\). It thus follows by induction that \(\Phi\) is \(L\)-Lipschitz. Since \(Y\) is a complete metric space, \(\Phi\) can be uniquely extended to an \(L\)-Lipschitz map \(X \to Y\) which for simplicity we also denote by \(\Phi\).

To finish the proof we show that \(\Phi(X)\) is \(\sigma_Y\)-convex. By construction of \(\Phi\) and since both \(\sigma_X\) and \(\sigma_Y\) are induced by a conical midpoint map, it follows that \(\Phi(\sigma_X(x, y, t))=\sigma_Y(\Phi(x), \Phi(y), t)\) for all \(x\), \(y\in V\) and all \(t\in [0,1]\). Let now \(x\), \(y\in X\) be arbitrary. Then there exists \(x_k\), \(y_k\in V\) such that \(x_k \to x\) and \(y_k \to y\) as \(k\to \infty\), respectively. Moreover, \(\sigma_{x_k y_k}\to \sigma_{xy}\) uniformly. Hence, as \(\Phi\) is Lipschitz continuous, we have \(\Phi(\sigma_X(x, y, t))=\sigma_Y(\Phi(x), \Phi(y), t)\) for all \(t\in [0,1]\). This shows that \(\Phi(X)\) is \(\sigma_Y\)-convex. 
\end{proof}

\section{On a question of Descombes and Lang}\label{sec:bicombings}

In practice, it is often desirable to impose stronger properties on a bicombing than \eqref{eq:conical}. See \cite{chalopin2020helly, huang2020morse, kleiner-2020, haettel2022link} for some recent examples. Descombes and Lang \cite{descombes-lang-2015} introduced the following notions:

\begin{enumerate}
    \item  if \eqref{eq:conical} holds, then \(\sigma\) is said to be \textit{conical}. \vspace{0.25em}
    \item if for all \(x\), \(y\), \(x'\), \(y'\in X\), the map \(t\mapsto d(\sigma_{xy}(t), \sigma_{x'y'}(t))\) is convex on \([0,1]\),  then \(\sigma\) is called \textit{convex}. \vspace{0.25em}
    \item if
    \[
    \sigma(\sigma(x, y, r), \sigma(x, y, s), t)=\sigma(x, y, (1-t)r+ts)
    \]
    for all \(x\), \(y\in X\) and all \(r\), \(s\), \(t\in [0,1]\) with \(r<s\), then \(\sigma\) is called \textit{consistent}. \vspace{0.15em}
\end{enumerate}

Consistent bicombings are used in \cite{foertsch-2008, lytchak-2007},
and a variant of the definition that allows for a bounded error is studied in \cite[Definition 2.6]{engel2021coronas}. We do not know if every space with a bicombing also admits a consistent bicombing\footnote{This innocent looking question seems to be quite difficult on closer inspection}. Clearly, the following implications hold
\begin{equation}\label{eq:implications}
\sigma \text{ consistent \& convex} \Longrightarrow \sigma \text{ convex} \Longrightarrow \sigma \text{ conical}.
\end{equation}
As it turns out, both reverse implications do not hold. Indeed, there are many examples of conical bicombings that are not convex (see \cite[Example 2.2]{descombes-lang-2015} and \cite[Example 3.6]{basso2020extending}). Hence, the second reverse implication in \eqref{eq:implications} does not hold. Moreover, as is demonstrated in \cite[Theorem~1.1]{basso-miesch-2019} there exist reversible convex bicombings that are not consistent. Thus, the first reverse implication in \eqref{eq:implications} also does not hold. 

However, the authors are not aware of any example of a metric space with a conical bicombing that does not also admit a consistent conical bicombing. In other words, the following question of Descombes and Lang \cite{descombes-lang-2015} is still open.

\begin{question}[Descombes--Lang]\label{qe:descombes-lang}
Let \(X\) be a complete metric space. Is it true that \(X\) admits a conical bicombing if and only if it admits a consistent conical bicombing?
\end{question}

This question also appears in the problem list \cite[p. 385]{oberwolfach-report-2021}. A partial result that indicates an affirmative answer when \(X\) is proper has been obtained in \cite[Theorem~1.4]{basso2020extending}. One difficulty in finding a negative answer to Question~\ref{qe:descombes-lang} lies in the fact that many known examples of metric spaces with a conical bicombing have locally a nice structure. In this situation one can then employ a generalized version of the Cartan-Hadamard theorem \cite{miesch-2017} to construct a consistent conical bicombing. The metric space \((V, d)\) is locally not 'nice' as it is fractal-like in nature. So we believe that its completion could be a potential candidate for a counterexample to Question~\ref{qe:descombes-lang}.

\let\oldbibliography\thebibliography
\renewcommand{\thebibliography}[1]{\oldbibliography{#1}
\setlength{\itemsep}{4pt}} 
\bibliographystyle{plain}
\bibliography{sample}

\vspace{1em}

{\small\noindent G. Basso (\texttt{giuliano.basso@web.de}) \\
 Max Planck Institute for Mathematics,
Vivatsgasse 7,
53111 Bonn,
Germany\\}

{\small\noindent
Y. Krifka (\texttt{krifka@mpim-bonn.mpg.de}) \\
 Max Planck Institute for Mathematics,
Vivatsgasse 7,
53111 Bonn,
Germany\\}

{\small\noindent E. Soultanis (\texttt{elefterios.soultanis@gmail.com}) \\
Department of Mathematics and Statistics, University of Jyväskylä, P.O. Box 35, FI-40014 University of Jyväskylä, Finland}

\end{document}